\title{J\'onsson groups of various cardinalities}
\author{Samuel M. Corson}
\address{School of Mathematics, University of Bristol, Fry Building, Woodland Road, Bristol, BS8 1UG, United Kingdom.}
\email{sammyc973@gmail.com}
\theoremstyle{definition}\newtheorem{theorem}{Theorem}
\theoremstyle{definition}
\theoremstyle{definition}\newtheorem{corollary}{Corollary}
\theoremstyle{definition}\newtheorem{proposition}[theorem]{Proposition}
\theoremstyle{definition}\newtheorem{definition}[theorem]{Definition}
\theoremstyle{definition}
\theoremstyle{definition}
\theoremstyle{definition}
\theoremstyle{definition}
\theoremstyle{definition}\newtheorem{lemma}[theorem]{Lemma}
\theoremstyle{definition}
\theoremstyle{definition}
\theoremstyle{definition}\newtheorem{definitions}[theorem]{Definitions}
\theoremstyle{definition}
\theoremstyle{definition}
\theoremstyle{definition}
\newcommand{\Po}{\mathcal{P}}
\newcommand{\Aut}{\operatorname{Aut}}
\newcommand{\Pof}{\mathcal{P}_{\text{fin}}}
\begin{document}

\keywords{J\'onsson group, J\'onsson algebra}
\subjclass[2010]{20A15, 20E15}
\thanks{This work was supported by the Heilbronn Institute for Mathematical Research, Bristol, UK.}

\begin{abstract}
A group $G$ is J\'onsson if $|H| < |G|$ whenever $H$ is a proper subgroup of $G$.  Using an embedding theorem of Obraztsov it is shown that there exists a J\'onsson group $G$ of infinite cardinality $\kappa$ if and only if there exists a J\'onsson algebra of cardinality $\kappa$.  Thus the question as to which cardinals admit a J\'onsson group is wholly reduced to the well-studied question of which cardinals are not J\'onsson.  As a consequence there exist J\'onsson groups of arbitrarily large cardinality.  Another consequence is that the infinitary edge-orbit conjecture of Babai is true.

\end{abstract}

\maketitle

\begin{section}{Introduction}

A group $G$ is \emph{J\'onsson} (named for Bjarni J\'onsson) if every proper subgroup of $G$ has cardinality strictly less than that of $G$.  Finite groups are J\'onsson and countably infinite examples include the quasi-cyclic groups $\mathbb{Z}(p^{\infty})$ and the Tarski monsters constructed by Ol'shanskii (\cite{Ol0}, \cite[Theorem 28.1]{Ol}).  A J\'onsson group of cardinality $\aleph_1$ was constructed by Shelah in \cite{Sh} and further examples of J\'onsson groups of cardinality $\aleph_1$ with striking properties were later obtained by Obraztsov (\cite{Ob000}, \cite[Corollary 35.4]{Ol}).

There are known to exist J\'onsson groups of cardinality $\aleph_2, \aleph_3, \ldots$ (see \cite[Theorem E]{MoOb}) but there do not appear to be any of cardinality $\aleph_{\omega}$ or higher in the literature which are constructed only from the standard ZFC axioms.  It is known that if there is a J\'onsson group of cardinality $\kappa$ then there exists one of cardinaity $\kappa^+$ \cite{MoOb}, where $\kappa^+$ denotes the successor cardinal to $\kappa$ (the smallest cardinal which is strictly greater than $\kappa$).  If  $2^{\lambda} = \lambda^+$ then there exists a J\'onsson group of cardinality $\lambda^+$  \cite{Sh}, but the failure of this equality at every cardinal $\lambda > 1$ is consistent with ZFC, modulo some large cardinal assumptions \cite{ForWoo}.

Before stating the main theorem we recall some definitions.  Recall that an \emph{algebra} is an ordered pair $(A, \mathcal{F})$ where $A$ is a set and $\mathcal{F}$ is a collection of finitary operations on $A$.  An algebra $(A, \mathcal{F})$ is \emph{J\'onsson} if $\mathcal{F}$ is countable and any proper subalgebra is of cardinality strictly less than $|A|$.  The main result is the following.

\begin{theorem}\label{mainJonssontheorem}  For $\kappa$ an uncountable cardinal, there exists a J\'onsson algebra of cardinality $\kappa$ if and only if there exists a simple, torsion-free J\'onsson group of cardinality $\kappa$.
\end{theorem}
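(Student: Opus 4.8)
The statement splits into two implications of very unequal difficulty, and I would treat them separately.

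\textit{(Group $\Rightarrow$ algebra.)} This direction is immediate and requires neither simplicity nor torsion-freeness. Given a J\'onsson group $G$ of cardinality $\kappa$, I would simply regard it as the algebra $(G,\{m,j,e\})$, where $m$ is binary multiplication, $j$ is unary inversion, and $e$ is the nullary operation picking out the identity. The operation set is finite, hence countable, and the subalgebras of this algebra are precisely the subgroups of $G$ (the nullary $e$ forces the identity into every subalgebra, so nonempty subsets closed under $m,j$ are exactly the subgroups). As every proper subgroup has cardinality $<\kappa$, so does every proper subalgebra, and $(G,\{m,j,e\})$ is a J\'onsson algebra of cardinality $\kappa$.

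\textit{(Algebra $\Rightarrow$ group.)} Here is where Obraztsov's embedding theorem carries the load. Fix a J\'onsson algebra $(A,\mathcal{F})$ with $|A|=\kappa$ and $\mathcal{F}=\{f_n\}_{n<\omega}$. The first thing I would record is a reformulation of the J\'onsson property in terms of the subalgebra-generation operator $\operatorname{Sg}$: since a subalgebra generated by fewer than $\kappa$ elements under countably many finitary operations again has cardinality $<\kappa$, and since every proper subalgebra has cardinality $<\kappa$, for $X\subseteq A$ one has $\operatorname{Sg}(X)=A$ if and only if $|X|=\kappa$. In other words, a subset of $A$ fails to generate the whole algebra exactly when it has size $<\kappa$. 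The group-theoretic J\'onsson property is the formally identical statement with $\operatorname{Sg}$ replaced by subgroup generation $\langle\,\cdot\,\rangle$; the plan is therefore to transport the former into the latter through a group built on top of $A$.

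Concretely, I would first manufacture torsion-free group-theoretic input from $(A,\mathcal{F})$: take a torsion-free group generated by a copy of $A$ together with finitely or countably many auxiliary ``control'' elements, and impose relations encoding each operation $f_n$ by a fixed word, so that the value $f_n(a_1,\dots,a_{k_n})\in A$ is recovered from $a_1,\dots,a_{k_n}$ and the control elements inside the group. The effect I am after is that for $X\subseteq A$ the trace on $A$ of the subgroup generated by $X$ and the control elements is exactly $\operatorname{Sg}(X)$. I would then apply Obraztsov's embedding theorem to embed this group into a simple, torsion-free group $G$ of cardinality $\kappa$ in which every proper subgroup is either (infinite) cyclic or conjugate into the subgroup generated by a proper subalgebra of $A$. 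Granting this, every proper subgroup of $G$ is either of size at most $\aleph_0$ or conjugate into $\langle B\rangle$ for some proper subalgebra $B$ with $|B|<\kappa$; as a group generated by fewer than $\kappa$ elements has cardinality $<\kappa$, every proper subgroup of $G$ has cardinality $<\kappa$, so $G$ is J\'onsson, and simplicity and torsion-freeness come straight from the conclusion of Obraztsov's theorem.

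The crux---and the step I expect to be most delicate---is arranging the input so that the ``controlled'' proper subgroups produced by Obraztsov's theorem are precisely the conjugates of subgroups generated by \emph{proper} subalgebras, while respecting the admissibility hypotheses of that theorem (bounding the prescribed family by $\kappa$ and meeting its compatibility requirements). The danger is that $A$ may possess more than $\kappa$ proper subalgebras, so one cannot index the prescribed family by all of them; instead the encoding of $\mathcal{F}$ must be strong enough that any proper subgroup whose trace on $A$ generates all of $A$ is forced to be improper, i.e.\ to equal $G$. Checking that the closure operator $X\mapsto\operatorname{Sg}(X)$ survives the embedding---so that the subgroups of size $\kappa$ are exactly those whose trace on $A$ generates $A$---is the technical heart of the argument.
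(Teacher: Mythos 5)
Your easy direction matches the paper, which dismisses it in one line, and you have correctly identified the two load-bearing ingredients for the hard direction: Obraztsov's embedding theorem and the reformulation of the J\'onsson property as ``$\operatorname{Sg}(X)=A$ if and only if $|X|=\kappa$.'' But the step you yourself flag as the crux is precisely the step that is missing, and the mechanism you sketch for it --- encoding each operation $f_n$ by relations involving auxiliary ``control'' elements so that the subgroup trace on $A$ computes $\operatorname{Sg}$ --- is not how the difficulty is resolved and faces real obstacles: imposing relations of the form $f_n(a_1,\dots,a_{k_n})=w(a_1,\dots,a_{k_n},c)$ can collapse elements of $A$, can create torsion, and takes you outside the hypotheses of the embedding theorem you then invoke, since the relevant version (the paper's Proposition \ref{embedding}) accepts as input a free amalgam of prescribed groups with \emph{no} extra relations, together with a ``generating function'' $f$ on subsets of the amalgam. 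Likewise your worry that $A$ may have more than $\kappa$ proper subalgebras is on target, but you leave it unresolved rather than showing how the encoding forces large subgroups to be improper.

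The paper's resolution avoids relations entirely. First, by a lemma of \L{}os one may assume the algebra carries a single binary operation $j$. One then takes the free amalgam of $\kappa$ infinite cyclic groups $G_\alpha=\langle z_\alpha\rangle$ (torsion-freeness of the target then comes from part (2) of Proposition \ref{embedding}) and encodes $j$ purely through the generating function: for $X$ not contained in a single factor, $f(X)$ is the union of the $G_\alpha\setminus\{1\}$ over all $\alpha$ with $z_\alpha$ in the $j$-closure of the set of generators $\tau(X)$ met by $X$. The admissibility hypotheses reduce to the closure conditions (a)--(c) of Definition \ref{generating}, verified by a routine check. Obraztsov's theorem then says every subgroup of the resulting simple group $G$ is cyclic, conjugate into a factor, or conjugate to some $\langle f(X)\rangle$, and the J\'onsson property of the algebra enters only at the end via a counting argument: if $|\langle f(X)\rangle|=\kappa$ then $|X|=\kappa$ (finite $Z$ give countable $f(Z)$, and $f(X)$ is the union of the $f(Z)$ over finite $Z\subseteq X$), hence $|\tau(X)|=\kappa$ because $\tau$ is countable-to-one, hence the $j$-closure of $\tau(X)$ is everything and $f(X)=\Omega$, forcing the subgroup to be all of $G$. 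This is why ``more than $\kappa$ proper subalgebras'' never becomes an issue: nothing is indexed by subalgebras; the single function $f$ automatically sends every $X$ of size $\kappa$ to all of $\Omega$. Your proposal, as it stands, has the right skeleton but omits this construction, which is the actual content of the proof.
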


Naturally a J\'onsson group is a J\'onsson algebra, so we are only concerned with producing a J\'onsson group given the existence of a J\'onsson algebra.  From Theorem \ref{mainJonssontheorem} one obtains many new examples of J\'onsson groups.

\begin{corollary}\label{cutefacts}  There is a J\'onsson group of cardinality $\kappa$ when
\begin{enumerate}

\item $\kappa$ is the successor of a regular cardinal \cite{Try};

\item $\kappa = \lambda^+$, where $\lambda$ is singular and not a limit of weakly inaccessible cardinals \cite{Sh2}; or

\item $\kappa = \beth_{\omega}^+$ \cite{Sh3}.

\end{enumerate}

\end{corollary}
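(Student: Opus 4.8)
The plan is to read off each case from Theorem~\ref{mainJonssontheorem} together with the standard set-theoretic fact that a cardinal $\kappa$ is \emph{not} a J\'onsson cardinal exactly when there exists a J\'onsson algebra of cardinality $\kappa$. Indeed, by definition $\kappa$ is a J\'onsson cardinal if every algebra of cardinality $\kappa$ with countably many finitary operations possesses a proper subalgebra of cardinality $\kappa$; negating this is precisely the assertion that some algebra of cardinality $\kappa$ has all proper subalgebras of strictly smaller cardinality, i.e.\ that a J\'onsson algebra of cardinality $\kappa$ exists. Thus the corollary is entirely a matter of matching known non-J\'onssonness results to the hypothesis of the main theorem.

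For each of the three families I would invoke the cited result to obtain such an algebra: \cite{Try} shows that the successor of a regular cardinal is never J\'onsson; \cite{Sh2} handles $\lambda^+$ when $\lambda$ is singular and not a limit of weakly inaccessibles; and \cite{Sh3} treats $\beth_\omega^+$. Before applying Theorem~\ref{mainJonssontheorem} I would first check that each of these cardinals is uncountable, as the theorem requires: the smallest instance, the successor $\aleph_1$ of the regular cardinal $\aleph_0$, is already uncountable, and the remaining cases are manifestly larger.

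Applying the nontrivial (forward) direction of Theorem~\ref{mainJonssontheorem} to the J\'onsson algebra supplied in each case then yields a simple, torsion-free J\'onsson group of cardinality $\kappa$, which proves the corollary. I do not expect any substantial obstacle here: all the difficulty has already been absorbed, on the group-theoretic side into Theorem~\ref{mainJonssontheorem} and on the combinatorial side into the cited constructions. The only routine point to confirm is that those constructions are algebras in the present sense---a set carrying at most countably many finitary operations---which they are, since combinatorial J\'onsson algebras are customarily presented with a finite or countable signature.
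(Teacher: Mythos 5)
Your proposal is correct and matches the paper's (implicit) argument exactly: the corollary is obtained by citing the results of Tryba and Shelah, which show each of the listed cardinals is not a J\'onsson cardinal and hence carries a J\'onsson algebra, and then applying the nontrivial direction of Theorem~\ref{mainJonssontheorem}. Your additional checks (uncountability of each $\kappa$, and that the cited constructions fit the paper's definition of an algebra with countable signature) are sound and consistent with how the paper uses these references.
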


For example, there exist J\'onsson groups of cardinality $\aleph_1$, $\aleph_2$, $\aleph_{\omega + 1}$, $\aleph_{\omega^2 + 7}$, and $\aleph_{\omega_1 + 1}$.  Since the successor of a cardinal is regular, we obtain from (1) that for each infinite cardinal $\kappa$ there exists a J\'onsson group of cardinality $\kappa^{++}$.  Thus there are J\'onsson groups of arbitrarily large (regular) cardinality and this in itself has a consequence in combinatorics which we mention.

The infinitary edge-orbit conjecture of L. Babai \cite[Conjecture 5.22]{Bab} is the following: 

\begin{center}  \emph{For each cardinal $\kappa$ there exists a group $G$ such that whenever $G \simeq \Aut(\Gamma)$ for some graph $\Gamma$ the action of $\Aut(\Gamma)$ on the edges of $\Gamma$ has at least $\kappa$ orbits.}

\end{center}

\noindent  It was shown by Simon Thomas that this conjecture is true provided there exist J\'onsson groups of arbitrarily large regular cardinality \cite{Tho} and so the following is immediate.

\begin{corollary}  The infinitary edge-orbit conjecture is true.
\end{corollary}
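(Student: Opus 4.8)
The plan is to reduce the conjecture directly to the two ingredients assembled above: the existence of J\'onsson groups of arbitrarily large regular cardinality, and Thomas's reduction \cite{Tho} of Babai's conjecture to precisely that existence statement. Since \cite{Tho} asserts that the infinitary edge-orbit conjecture holds provided there is a J\'onsson group of cardinality $\kappa$ for arbitrarily large regular $\kappa$, it suffices to confirm that the groups furnished by Corollary \ref{cutefacts} meet this hypothesis.

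First I would verify the regularity and the unboundedness of the cardinalities in hand. For any infinite cardinal $\kappa$ the cardinal $\kappa^{++} = (\kappa^+)^+$ is the successor of the cardinal $\kappa^+$, and $\kappa^+$ is itself a successor cardinal, hence regular in ZFC; thus $\kappa^{++}$ is the successor of a regular cardinal. Part (1) of Corollary \ref{cutefacts} \cite{Try} then provides a J\'onsson group of cardinality $\kappa^{++}$, and $\kappa^{++}$, being a successor cardinal, is itself regular. As $\kappa$ ranges over all infinite cardinals the values $\kappa^{++}$ are unbounded, so we obtain J\'onsson groups of arbitrarily large regular cardinality.

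Finally I would invoke Thomas's result with these groups. With the hypothesis of \cite{Tho} verified, the desired conclusion — that for every cardinal there is a group $G$ such that any graph $\Gamma$ with $\Aut(\Gamma) \simeq G$ has at least that many edge-orbits — follows immediately. There is essentially no obstacle beyond the two elementary set-theoretic observations just used, namely that successor cardinals are regular and that $\kappa^{++}$ grows without bound as $\kappa$ increases; the substantive content lies entirely in Corollary \ref{cutefacts} and in Thomas's reduction, both of which we are entitled to assume.
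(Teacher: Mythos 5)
Your proposal is correct and follows exactly the paper's route: it derives J\'onsson groups of the regular cardinalities $\kappa^{++}$ from part (1) of Corollary \ref{cutefacts} (successors of regular cardinals, via \cite{Try}), notes these are unbounded and regular, and then applies Thomas's reduction \cite{Tho}. The two elementary set-theoretic observations you make explicit are precisely what the paper leaves implicit in calling the corollary ``immediate.''
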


For another application, one can combine Theorem \ref{mainJonssontheorem} with \cite[Theorem G]{MoOb} to see that for any infinite cardinal $\kappa$ there exists a topological group $H$ of cardinality $\kappa^{+++}$ which is not discrete, with $H$ simple and J\'onsson and every proper subgroup of $H$ is discrete.  Keisler and Rowbottom have shown that there are J\'onsson algebras of every infinite cardinality in G\"odel's constructible universe $L$ \cite[Corollary 9.1]{Dev} and so in the constructible universe there exist J\'onsson groups of every nonzero cardinality.

We caution the reader that there are some results about J\'onsson algebras which do not carry over into the area of J\'onsson groups.  Recall that an algebra is \emph{locally finite} if each finitely generated subalgebra is finite.  It is known that there exists a J\'onsson algebra of cardinality $\kappa$ if and only if there exists a locally finite J\'onsson algebra of cardinality $\kappa$ \cite[Theorem 3.10]{Dev}.  On the other hand if $\kappa$ is an uncountable regular cardinal then there cannot exist a locally finite J\'onsson group of cardinality $\kappa$ \cite[Theorem 2.6]{KegWehr}.

\end{section}

\begin{section}{The proof}

The proof of Theorem \ref{mainJonssontheorem} will make use of a classical result regarding J\'onsson algebras as well as a group embedding theorem.  The following is obtained from a result of \L{}os (see \cite[Theorem 3.4]{Dev}).

\begin{lemma}\label{Los}  There is a J\'onsson algebra on an infinite set $X$ if and only if there exists a binary operation $j: X \times X \rightarrow X$ such that $(X, \{j\})$ is J\'onsson.
\end{lemma}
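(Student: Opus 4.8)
The reverse implication is immediate: if $(X,\{j\})$ is J\'onsson then it is an algebra whose family of operations is finite, hence countable, so it is a J\'onsson algebra by definition. All the work is in the forward direction, so fix a J\'onsson algebra $(A,\mathcal{F})$ with $\mathcal{F}=\{f_n:n\in\omega\}$ (countable by the definition of J\'onsson algebra), where $f_n$ has arity $a_n$, and take $X=A$. The plan is to construct a single binary operation $j\colon A\times A\to A$ with the key property that \emph{every nonempty} $j$-closed subset of $A$ is closed under every $f_n$. Granting this, the lemma follows in a line: if $B\subsetneq A$ is a proper $j$-subalgebra then $B$ is $\mathcal{F}$-closed (trivially if empty, by the key property otherwise) and proper, so $|B|<|A|$ because $(A,\mathcal{F})$ is J\'onsson; hence $(A,\{j\})$ is J\'onsson. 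Crucially I do not need the converse implication, and it is this asymmetry, together with the infiniteness of $A$, that makes the construction feasible.

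I would have $j$ simulate a Curry-style ``apply''. Since $A$ is infinite, $|A^{<\omega}|=|A|$, so I may reserve inside $A$ a disjoint supply of distinguished elements $F_0,F_1,\dots$ (one ``unapplied'' copy of each $f_n$) together with an injectively coded set of ``partial application'' tokens, each recording an index $n$ and a string of arguments $x_1,\dots,x_k$ with $k<a_n$. I then define $j$ by cases so that $j(F_n,x_1)$ is the token for $(n;x_1)$, so that $j$ applied to the token for $(n;x_1,\dots,x_k)$ and a further $x_{k+1}$ returns the token for $(n;x_1,\dots,x_{k+1})$ when $k+1<a_n$, and returns the genuine value $f_n(x_1,\dots,x_{a_n})$ when $k+1=a_n$. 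Iterating $j$ along the left then gives $j(\dots j(j(F_n,x_1),x_2)\dots,x_{a_n})=f_n(x_1,\dots,x_{a_n})$, so any $j$-closed set containing $F_n$ and $x_1,\dots,x_{a_n}$ automatically contains $f_n(x_1,\dots,x_{a_n})$.

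It remains to guarantee that every nonempty $j$-closed set $B$ already contains all the $F_n$, and for this I build the $F_n$ into $j$ as well, using the still-unassigned ``diagonal'' values: set $j(x,x)=F_0$ for every $x\neq F_0$, while $j(F_0,F_0)=F_1$ and $j(F_0,F_k)=F_{k+1}$ for $k\ge 1$. Then any nonempty $j$-closed $B$ contains $F_0$, since for $x\in B$ either $x=F_0$ or $j(x,x)=F_0\in B$; and once $F_0\in B$ the relations $F_1=j(F_0,F_0)$ and $F_{k+1}=j(F_0,F_k)$ force $F_k\in B$ for all $k$. Combined with the previous paragraph, every nonempty $j$-closed set is closed under all the $f_n$; unary and nullary operations of $\mathcal{F}$ are absorbed in the obvious way (a unary $g$ as $(x,y)\mapsto g(x)$, and finitely many constants by forcing a fixed finite set into every subalgebra, which does not affect the cardinality comparison). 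This establishes the key property, and hence the lemma.

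The main obstacle is the bookkeeping hidden in ``defined by cases''. One must fix the embedding of the token set and of the elements $F_n$ into $A$ so that the predicates ``$u$ is some $F_n$'' and ``$u$ is the token for $(n;x_1,\dots,x_k)$'' are mutually exclusive and decidable, making $j$ single-valued and total; and one must rule out the degenerate clashes that occur when an argument $x_{k+1}$ happens to coincide with the running token or with one of the $F_n$, in which case the diagonal rule would fire instead of the intended application step. Both are handled by the standard device of tagging each argument before it is absorbed (for instance by pairing it with $F_0$, which is available in every nonempty closed set), so that structural tokens and raw arguments occupy disjoint regions of $A$. Carrying this out carefully is routine but is where the real content of \L{}os's argument lies.
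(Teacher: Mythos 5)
The paper does not actually prove this lemma: it cites it as a result of \L{}os, via \cite[Theorem 3.4]{Dev}, so there is no in-paper argument to match yours against. What you have written is, in essence, a reconstruction of the standard proof of that cited result, and it is correct in outline. The easy direction is dispatched properly, the reduction of the lemma to your ``key property'' (every nonempty $j$-closed set is $\mathcal{F}$-closed, hence every proper $j$-subalgebra is a proper $\mathcal{F}$-subalgebra of cardinality $<|A|$) is logically sound, and your observation that the implication is only needed in one direction is exactly the asymmetry that makes a single binary operation suffice. The two mechanisms you use --- a currying ``apply'' operation absorbing arguments one at a time, and a generator chain $j(x,x)=F_0$, $j(F_0,F_k)=F_{k+1}$ forcing the function-symbols into every nonempty closed set --- are the right ones, and the clashes you flag are real: as literally written, $j(F_0,F_k)$ is assigned both $F_{k+1}$ by the chain rule and a token by the application rule, and $j(t,t)$ for a token $t$ is claimed by both the diagonal and the absorption step. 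Your proposed repair (tag each raw argument by pairing it with $F_0$ before absorption, so that application rules fire only on pairs whose second coordinate lies in a reserved tag-region $T$ disjoint from the $F_n$ and the tokens) does resolve all of these consistently; one checks that the domains of the four families of rules (tagging pairs $(x,F_0)$, chain pairs $(F_0,F_k)$, diagonal pairs $(x,x)$, application pairs with second coordinate in $T$) can be made pairwise disjoint, with the single residual special case that $\operatorname{tag}(F_0)$ cannot be $j(F_0,F_0)$ and must be routed through another reserved pair. So the ``routine bookkeeping'' you defer genuinely is routine, though not quite zero: a referee would want that tag-region case analysis written out.

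One small inaccuracy: you say the nullary operations are handled ``by forcing a fixed finite set into every subalgebra,'' but $\mathcal{F}$ is merely countable, so there may be infinitely many constants. This costs nothing --- the same chain device that forces all the $F_k$ into every nonempty $j$-closed set absorbs countably many constants --- but as stated your sentence assumes finiteness it is not entitled to. With that emendation, your argument is a complete and correct proof of the lemma, more self-contained than the paper's citation.
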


We will prepare to state a group embedding theorem by recalling some definitions.  For $X$ a set we let $\Po(X)$ denote the powerset of $X$ and $\Pof(X)$ denote the collection of finite subsets of $X$.

\begin{definitions}  Given a collection of nontrivial groups without involutions $\{G_i\}_{i \in I}$ we call the set $\bigcup_{i \in  I} G_i$, where $\{1\} = G_i \cap G_j$ whenever $i \neq j$, the \emph{free amalgam} of the groups $\{G_i\}_{i \in I}$ and will denote it by $\Omega^1$.  We will let $\Omega \subseteq \Omega^1$ denote the subset $\Omega^1 \setminus \{1\}$.  An \emph{embedding} of $\Omega^1$ to a group $G$ is an injective function $E: \Omega^1 \rightarrow G$ where each restriction $E \upharpoonright G_i$ is a homomorphism.  An embedding $E$ extends to a homomorphism $\phi$ from the free product $*_{i \in I} G_i$ to $G$, and  since each $\phi \upharpoonright G_i$ is an isomorphism, we consider each $G_i$ to be a subgroup of $G$ in such a situation.
\end{definitions}

\begin{definition}\label{generating}  Let $\{G_i\}_{i \in I}$ be a collection of nontrivial groups without involutions, $\Omega^1$ be the free amalgam, and $\Omega = \Omega^1 \setminus \{1\}$ as above.  A function $f: \Po(\Omega) \setminus \{\emptyset\} \rightarrow \Po(\Omega)$ is \emph{generating} provided 

\begin{enumerate}[(a)]

\item if $X \subseteq G_i$ for some $i \in I$ then $f(X) = \langle X \rangle \setminus \{1\}$;

\item if $X \subseteq \Omega$ is finite and $X \not\subseteq G_i$ for all $i \in I$ then $f(X) = Y$ where $Y$ is a countable subset of $\Omega$ such that $X \subseteq Y$ and if $Z \subseteq Y$ is finite nonempty then $f(Z) \subseteq Y$;

\item if $X \subseteq \Omega$ is infinite and $X \not\subseteq G_i$ for all $i \in I$ then $f(X) = \bigcup_{Z \in \Pof(X) \setminus \{\emptyset\}} f(Z)$ .

\end{enumerate}

\end{definition}

The following is a special case of a beautiful embedding theorem of Obraztsov \cite[Theorem A]{Ob0}.

\begin{proposition}\label{embedding}  Suppose that $\{G_i\}_{i \in I}$ is a collection of nontrivial groups without involutions and that $f$ is a generating function on this collection, with $|I| \geq 2$.  Then there is a simple group $G$ and an embedding $E: \Omega_1 \rightarrow G$ of the free amalgam which induces a homomorphism $\phi: *_{i \in I} G_i \rightarrow G$ satisfying the following properties:

\begin{enumerate}

\item $\phi$ is surjective;

\item if $h \in G$ is not conjugate in $G$ to an element in one of the groups $G_i$ then $h$ is of infinite order;

\item each subgroup $M$ of $G$ is either cyclic, or conjugate in $G$ to a subgroup of one of the $G_i$, or conjugate in $G$ to a subgroup of form $\langle C \rangle$ where $C = f(X)$ for some $\emptyset \neq X \subseteq \Omega$.
\end{enumerate}

\end{proposition}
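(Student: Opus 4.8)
The plan is to obtain Proposition \ref{embedding} as a direct specialization of Obraztsov's general Theorem A in \cite{Ob0}, so that the work is almost entirely one of translating our hypotheses into the language of that theorem and then reading off its conclusions. Obraztsov's theorem embeds an arbitrary family of involution-free groups into a simple group while prescribing, through a closure-type condition, exactly which subgroups can occur up to conjugacy. Accordingly, the first step is to package the generating function $f$ as the closure operator demanded there: one sets the distinguished family of admissible subgroups to be $\{\langle f(X)\rangle : \emptyset \neq X \subseteq \Omega\}$, to be adjoined to the factors $G_i$ and the cyclic subgroups in the eventual classification.

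The second step is to check that conditions (a)--(c) are precisely what is needed to feed $f$ into the theorem. Condition (a) says that on subsets lying inside a single factor $f$ returns the ordinary generated subgroup with the identity removed, so such sets contribute nothing beyond the factors themselves. Condition (b) supplies, for each finite $X$ spanning at least two factors, a countable superset $f(X)$ that is \emph{closed} under $f$, in the sense that $f(Z) \subseteq f(X)$ for every finite nonempty $Z \subseteq f(X)$; this is exactly the requirement that $\langle f(X)\rangle$ be a legitimate admissible subgroup. Condition (c) makes $f$ finitary, that is, determined on every set by its values on finite subsets, which guarantees that the admissible family is closed under the directed unions arising in Obraztsov's construction. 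Together with $|I| \geq 2$ and the absence of involutions, these are the standing hypotheses of Theorem A.

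The third step is to transcribe the conclusions. The theorem produces a simple group $G$ together with an embedding $E \colon \Omega^1 \to G$ of the free amalgam, each restriction $E \upharpoonright G_i$ being an isomorphism onto its image; extending by the universal property of the free product yields $\phi \colon *_{i \in I} G_i \to G$, and surjectivity of $\phi$ is part of the conclusion, giving (1). The torsion control is the assertion that every element of finite order is conjugate into some factor $G_i$, which is exactly the contrapositive of (2). Finally, the subgroup classification of Theorem A states that each proper subgroup is cyclic, or conjugate into some $G_i$, or conjugate into $\langle C\rangle$ for some $C = f(X)$ with $\emptyset \neq X \subseteq \Omega$, which is (3) verbatim.

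The main obstacle, and the only genuinely non-formal point, is verifying that the three defining conditions on $f$ coincide with a specialization of the closure axioms that Obraztsov imposes on his admissible family, and in particular that the countability built into (b) is harmless for the present application. One must also confirm that the conjugacy qualifiers match: the general theorem classifies proper subgroups only up to conjugacy, and our statement is phrased the same way, so no strengthening is claimed or required.
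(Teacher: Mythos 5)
Your proposal matches the paper exactly: the paper offers no proof of Proposition \ref{embedding} beyond the remark that it is ``a special case of a beautiful embedding theorem of Obraztsov \cite[Theorem A]{Ob0}'', and your argument is precisely that specialization, with Definition \ref{generating} playing the role of Obraztsov's closure conditions. Your translation of the hypotheses and conclusions (simplicity, surjectivity of $\phi$, torsion only inside conjugates of the $G_i$, and the three-way subgroup classification) is faithful, so nothing further is needed.
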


\begin{proof}[Proof of Theorem \ref{mainJonssontheorem}]  Suppose that there is a J\'onsson algebra of uncountable cardinality $\kappa$.  Let $\{G_{\alpha}\}_{\alpha < \kappa}$ be a collection of groups where each $G_{\alpha}$ is infinite cyclic and generated by $z_{\alpha}$.  Let $\Omega^1$ be the free amalgam of the groups $\{G_{\alpha}\}_{\alpha < \kappa}$ and $\Omega = \Omega^1 \setminus \{1\}$.  Let $\tau: \Omega \rightarrow \{z_{\alpha}\}_{\alpha < \kappa}$ be given by $g \mapsto z_{\alpha}$ where $g \in G_{\alpha}$.

As there is a J\'onsson algebra of cardinality $\kappa$, we have by Lemma \ref{Los} a binary operation $j: \{z_{\alpha}\}_{\alpha < \kappa} \times \{z_{\alpha}\}_{\alpha < \kappa} \rightarrow \{z_{\alpha}\}_{\alpha < \kappa}$ such that $(\{z_{\alpha}\}_{\alpha < \kappa}, \{j\})$ is a J\'onsson algebra.  For each subset $Y \subseteq \{z_{\alpha}\}_{\alpha < \kappa}$ we let $\overline{j}(Y) \subseteq \{z_{\alpha}\}_{\alpha < \kappa}$ denote the subalgebra generated by $Y$ under the operation $j$.  Let $\pi: \{z_{\alpha}\}_{\alpha < \kappa} \rightarrow \kappa$ be given by $z_{\alpha} \mapsto \alpha$.

We claim that the function $f: \Po(\Omega) \setminus \{\emptyset\} \rightarrow \Po(\Omega)$ defined by

\[
f(X) = \left\{
\begin{array}{ll}
\langle X \rangle \setminus \{1\}
                                            & \text{if } X \subseteq G_{\alpha}\text{ for some }\alpha \in \kappa, \\
(\bigcup_{\alpha \in \pi(\overline{j}(\tau(X)))} G_{\alpha}) \setminus \{1\}                                           & \text{otherwise}
\end{array}
\right.
\]

\noindent is generating.  Certainly condition (a) of Definition \ref{generating} holds.  To check condition (b) we let $X \subseteq \Omega$ be finite with $X \not\subseteq G_{\alpha}$ for all $\alpha < \kappa$.  The set $\tau(X) \subseteq \{z_{\alpha}\}_{\alpha < \kappa}$ is finite, and so the set $\overline{j}(\tau(X))$ generated by $\tau(X)$ under $j$ will be countable.  Then $\pi(\overline{j}(\tau(X)))$ is countable.  Thus the set $f(X) = (\bigcup_{\alpha \in \pi(\overline{j}(\tau(X)))} G_{\alpha}) \setminus \{1\}$ is countable as a countable union of countable sets.  Certainly $f(X) \supseteq X$ in this case, since 

\begin{center}
$X \subseteq (\bigcup_{\alpha \in \pi(\tau(X))} G_{\alpha}) \setminus \{1\} \subseteq (\bigcup_{\alpha \in \pi(\overline{j}(\tau(X)))} G_{\alpha}) \setminus \{1\}  = f(X)$.
\end{center}

\noindent Moreover given a finite nonempty $Z \subseteq f(X)$ we either have $Z \subseteq G_{\alpha}$ for some $\alpha < \kappa$, in which case $\alpha \in \pi(\overline{j}(\tau(X)))$ and 

\begin{center}
$f(Z) = \langle Z \rangle \setminus \{1\} \subseteq G_{\alpha} \setminus \{1\} \subseteq f(X)$
\end{center}

\noindent or else we have

\begin{center}
$f(Z) = (\bigcup_{\alpha \in \pi(\overline{j}(\tau(Z)))} G_{\alpha}) \setminus \{1\} \subseteq  (\bigcup_{\alpha \in \pi(\overline{j}(\tau(X)))} G_{\alpha}) \setminus \{1\}  = f(X)$
\end{center}

\noindent since $\tau(Z) \subseteq \overline{j}(\tau(X))$.  Thus condition (b) holds.  For condition (c) we let $X \subseteq \Omega$ be infinite such that $X \not\subseteq G_{\alpha}$ for each $\alpha \in \kappa$.  Given $g \in f(X)$ we have $g \in G_{\alpha} \setminus \{1\}$ for some $\alpha \in \pi(\overline{j}(\tau(X)))$.  Thus $z_{\alpha} \in \overline{j}(\tau(X))$ and we may select $Z_0 \subseteq \tau(X)$ which is finite such that $z_{\alpha} \in \overline{j}(Z_0)$, and we may assume without loss of generality that $Z_0$ has at least two elements (since $\tau(X)$ has at least two elements).  Select a finite $Z \subseteq X$ such that $\tau(Z) = Z_0$ and it is clear that $g \in f(Z)$.  Therefore $f(X) \subseteq \bigcup_{Z \in \Pof(X) \setminus \{\emptyset\}} f(Z)$.  To see the inclusion $f(X) \supseteq \bigcup_{Z \in \Pof(X) \setminus \{\emptyset\}}f(Z)$ it is sufficient to show $f(X) \supseteq f(Z)$ for every $Z \in \Pof(X) \setminus \{\emptyset\}$.  This is almost the same check as in part (b): either $Z \subseteq G_{\alpha}$ for some $\alpha < \kappa$, so that $\alpha \in \pi(\tau(X)) \subseteq \pi(\overline{j}(\tau(X)))$ and 

\begin{center}
$f(Z) = \langle Z \rangle \setminus \{1\} \subseteq G_{\alpha} \setminus \{1\} \subseteq f(X)$
\end{center}

\noindent or else

\begin{center}
$f(Z) = (\bigcup_{\alpha \in \pi(\overline{j}(\tau(Z)))} G_{\alpha}) \setminus \{1\} \subseteq  (\bigcup_{\alpha \in \pi(\overline{j}(\tau(X)))} G_{\alpha}) \setminus \{1\}  = f(X)$.
\end{center}

\noindent Thus $f(X) = \bigcup_{Z \in \Pof(X) \setminus \{\emptyset\}} f(Z)$ and (c) holds.

We may therefore apply Proposition \ref{embedding} to produce a simple group $G$ into which the free amalgam $\Omega^1$ embeds and satisfying the listed properties of Proposition \ref{embedding}.  Notice that $G$ is torsion-free by part (2) of  Proposition \ref{embedding}, since each of the subgroups $G_{\alpha}$ is torsion-free.  Also, $|G| = \kappa$ since $\Omega^1$ injects into $G$ and $G = \langle \bigcup_{\alpha < \kappa} G_{\alpha} \rangle$ with $|G_{\alpha}| = \aleph_0$ for each $\alpha < \kappa$. 

It remains to see that $G$ is J\'onsson.  Letting $M$ be a subgroup of $G$ with $|M| = \kappa$, we know by part (3) of Proposition \ref{embedding} that up to conjugation $M$ is equal to $\langle C \rangle$ where $C = f(X)$ for a nonempty $X \subseteq \Omega$.  Thus by conjugating if necessary we may assume without loss of generality that $M = \langle C \rangle$ for such a $C$.  Since $|M| = \kappa$ it is clear that $|C| = \kappa$ since $\kappa$ is uncountable.  If $X$ were to satisfy $X \subseteq G_{\alpha}$ for some $\alpha < \kappa$ then $f(X) = C \subseteq G_{\alpha}$, but $C$ is uncountable, a contradiction.  Thus $X$ is not a subset of any $G_{\alpha}$, and $X$ cannot be finite for then $C$ would be countable by condition (b).  Therefore $C = f(X) = \bigcup_{Z \in \Pof(X) \setminus \{\emptyset\}}f(Z)$.  Since $f(Z)$ is countable for any $Z \in \Pof(X) \setminus \{\emptyset\}$, and $|\Pof(X) \setminus \{\emptyset\}| = |X|$ since $X$ is infinite, we see that $\kappa = |C| \leq |X|\cdot \aleph_0$, and therefore $|X| = \kappa$.

Since the function $\tau: \Omega \rightarrow \{z_{\alpha}\}_{\alpha < \kappa}$ is countable-to-one we see that $|\tau(X)| = \kappa$, and as $j$ is as in Lemma \ref{Los} we get $\overline{j}(\tau(X)) = \{z_{\alpha}\}_{\alpha < \kappa}$.  Thus $\pi(\overline{j}(\tau(X))) = \kappa$ and 

\begin{center}

$C = f(X) = (\bigcup_{\alpha \in \pi(\overline{j}(\tau(X)))} G_{\alpha}) \setminus \{1\} = \Omega$

\end{center}

\noindent from which we have $M = \langle \Omega \rangle = G$ and we are done.

\end{proof}

\end{section}

\begin{section}*{Acknowledgement}

The author gives thanks to the anonymous referee for careful reading and suggested improvements of the paper.

\end{section}

\end{document}